\newcommand{\df}{\dfrac}
\newtheorem{theorem}{Theorem}
\newtheorem{lemma}[theorem]{Lemma}
\numberwithin{equation}{section}
\newtheorem{corollary}[theorem]{Corollary}
\begin{document}

\title{Independence densities of hypergraphs}

\author{Anthony Bonato}
\address{Department of Mathematics\\
Ryerson University\\
Toronto, ON\\
Canada, M5B 2K3}
\email{abonato@ryerson.ca}

\author{Jason I. Brown}
\address{Department of Mathematics and Statistics\\
Dalhousie University\\
Halifax, NS\\
CANADA B3H 3J5}
\email{brown@mathstat.dal.ca}

\author{Dieter Mitsche}
\address{Department of Mathematics\\
Ryerson University\\
Toronto, ON\\
Canada, M5B 2K3}
\email{dmitsche@ryerson.ca}

\author{Pawe{\l } Pra{\l }at}
\address{Department of Mathematics\\
Ryerson University\\
Toronto, ON\\
Canada, M5B 2K3}
\email{pralat@ryerson.ca}

\keywords{hypergraphs, independent sets, graph density, $F$-free graphs, independence polynomials}

\thanks{The authors gratefully acknowledge support from NSERC, MPrime, and Ryerson University}
\maketitle

\begin{abstract}
We consider the number of independent sets in hypergraphs, which allows us to define the independence density of countable hypergraphs. Hypergraph independence densities include a broad family of densities over graphs and relational structures, such as $F$-free densities of graphs for a given graph $F.$ In the case of $k$-uniform hypergraphs, we prove that the independence density is always rational. In the case of finite but unbounded hyperedges, we show that the independence density can be any real number in $[0,1].$ Finally, we extend the notion of independence density via independence polynomials.
\end{abstract}

\section{Introduction}

Densities of graphs and hypergraphs are well studied parameters in graph theory, and they can provide a convenient tool for measuring properties of infinite graphs. Examples of graph densities are the upper density~\cite{diestel}, homomorphism density~\cite{lovasz}, Tur\'{a}n density~\cite{katona}, cop density~\cite{bhw} and co-degree density~\cite{mubayi} (see also~\cite{epp,ped,peng1,zhu}).  In~\cite{bbkp} we considered the independence and chromatic densities of graphs, and we focus on a generalization of the former density in this paper. The {\em independence density} of a graph $G$ of order $n$, where $n$ is a positive integer, is the number of independent sets of $G$, divided by $2^n $, the total number of subsets of $V(G)$. For countable graphs, independence densities are defined as the limits of independence densities of chains of finite subgraphs; it was shown in \cite{bbkp} that the limit was independent of the chain used. A key result proved in~\cite{bbkp} was that the independence density of a countable graph is rational.

Independence may be viewed as the property of being $K_{2}$-free; that is, not containing a complete graph of order $2$. A more general density, therefore, may be defined in the following way. For $F$ be a connected finite graph, a graph $G$ is $F$-\emph{free} if $G$ does not contain $F$ as an induced subgraph. Define $F(G)$ to be the number of subsets $S$ in $V(G)$ (including the empty set) so that subgraph induced by $S$ in $G$ is $F$-free. Define the $F$\emph{-free density of }$G$ by $F(G)/2^{n},$ where $n=|V(G)|$. We may extend this notion even further by considering the $\mathcal{F}$-free density, where $\mathcal{F}$ is a (possibly infinite) set of finite graphs. This gives rise to the notion of \emph{acyclic density}, where $\mathcal{F}$ is the set of all cycles, and \emph{bipartite-free density}, where $\mathcal{F}$ is the set of all odd cycles.

All of these extensions fit into the broad context of independence densities of hypergraphs. To be more precise, let $H$ be a finite hypergraph (or \emph{set system}) of order $n$, where $n\ge 1$ is an integer. Hence, $H$ consists of vertex set $V=V(H)$ of cardinality $n$ and some collection $E=E(H)$ of subsets of $V$, called the {\em hyperedge set} of $H$.  We write $H=(V(H),E(H))$, or $H=(V,E)$ if $H$ is clear from context. An {\em independent set} of $H$ is a subset of vertices of $H$ that does not contain a hyperedge of $H$. Just as in the case for graphs, the {\em independence density} of $H$, written $\mathrm{id}(H)$, is defined to be $i(H)/2^{n}$, where $i(H)$ is the number of independent sets of $H$ and $n$ is the number of vertices of $H$.

For example, consider a hypergraph whose vertices are those of some fixed graph $G$ along with $E$ being the set of subsets of $V(G)$ containing a copy of $K_2$. In this case, $i(H)$ is the independence density of $G.$ Further, if we replace $K_2$ by a set of finite graphs $\mathcal{F}$, then define a hypergraph $H_{G,\mathcal{F}}$ on the vertices of $G$ whose edges correspond to subsets of $V(G)$ which induces a subgraph in $\mathcal{F}$ (note that hyperedges for this hypergraph are finite). In this setting the independence density of $H_{G,\mathcal{F}}$ is the $\mathcal{F}$-free density of $G$. We may also naturally extend $\mathcal{F}$-free densities to relational structures. Independence densities of hypergraphs $H=(V,E)$ therefore, include a vast class of $\mathcal{F}$-free densities over structures such as graphs, hypergraphs, directed graphs, or ordered sets.

We extend the definition of independence density of finite hypergraphs in the natural way to countable hypergraphs (which we define to be on a countable set of vertices but with edges of finite cardinality) as limits of densities of chains of finite induced subhypergraphs. We collect some basic properties of independence density in the next section. We prove that independence density does not depend on the chain used in Theorem~\ref{uni}, and we give bounds via the hypergraph matching number in Theorem~\ref{bounds}. We prove that for $k$-uniform hypergraphs, the independence density is always rational in Theorem~\ref{rat}. However, in Section~\ref{unbs} we demonstrate that in hypergraphs with hyperedges of unbounded size, the independence density can be any fixed real number in $[0,1]$. In the final section, we extend the notion of independence density via independence polynomials and examine limiting values for different values of their variable $x$.

All the hypergraphs we consider are countable. A general reference on hypergraphs is~\cite{berge}, with~\cite{voloshin} a more modern reference. See~\cite{diestel} for additional background on graphs. Let $\mathcal{H}_k$ be the set of countable hypergraphs whose hyperedges have cardinality at most $k$, and $\mathcal{H}$ to be the set of all countable hypergraphs. For hypergraphs $H_{1}$ and $H_{2}$ with disjoint vertex sets, let $H_{1}\cup H_{2}$ denote their disjoint union. The notions of subhypergraph, spanning subhypergraph, and induced subhypergraph are defined in an analogous manner to the definitions in graphs. If $H_2$ is a proper subhypergraph of $H_1$, then $H_1 - H_2$ is the subhypergraph induced by $V(H_1) \setminus V(H_2).$ The countably infinite clique is denoted by $K_{\omega}$ and its complement by $\overline{K_{\omega}}.$

We use the notation $\mathbb{N}$ for the natural numbers (including $0$), and $\mathbb{N}^+$ for the set of positive integers. We use logarithms in the natural base.

\section{Independence density of infinite hypergraphs}\label{inf}

We collect some elementary properties of independence densities of hypergraphs that we will use throughout the paper. The proof of the following lemma is straightforward but is included for completeness.

\begin{lemma}\label{lem}
Let $H_{1}$ and $H_{2}$ be finite hypergraphs.
\begin{enumerate}
\item If $H_{1}$ is a spanning subhypergraph of $H_{2}$, then $i(H_{2})\le i(H_{1})$.
\item $i(H_{1} \cup H_{2})=i(H_{1})i(H_{2})$ and $\mathrm{id}(H_{1}\cup H_{2})=\mathrm{id}(H_{1})\mathrm{id}(H_{2})$.
\item If $H_{1}$ is a subhypergraph of $H_{2}$, then $\mathrm{id}(H_{2})\le \mathrm{id}(H_{1})$.
\end{enumerate}
\end{lemma}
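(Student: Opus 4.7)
\medskip

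\noindent\textbf{Proof plan.} All three parts should follow from elementary counting arguments on the family of independent sets, so the plan is to handle them in order, using each part as a tool in the next. The only substantive observation is that the defining property of an independent set, namely containing no hyperedge, is inherited in the appropriate directions as we modify the vertex set or the hyperedge set.

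For (1), I would argue that any independent set $S$ of $H_2$ is automatically an independent set of $H_1$: since $H_1$ is spanning, $S \subseteq V(H_1)$; since the hyperedge set of $H_1$ is contained in that of $H_2$, no hyperedge of $H_1$ lies in $S$. This gives an injection (in fact, an inclusion) from the independent sets of $H_2$ into those of $H_1$, yielding $i(H_2)\le i(H_1)$. For (2), I would exhibit a bijection between independent sets of $H_1\cup H_2$ and pairs $(S_1,S_2)$ of independent sets of $H_1$ and $H_2$, given by $S \mapsto (S\cap V(H_1),\, S\cap V(H_2))$. Since each hyperedge of the disjoint union lies entirely in one side, $S$ avoids all hyperedges if and only if each $S_i$ does. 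Multiplicativity of $\mathrm{id}$ then follows by dividing by $2^{|V(H_1)|+|V(H_2)|}$.

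For (3), the idea is to reduce to a spanning situation by padding $H_1$ with isolated vertices. Let $H_1'$ be the hypergraph on $V(H_2)$ whose hyperedges are precisely those of $H_1$; this is a spanning subhypergraph of $H_2$, and it is the disjoint union of $H_1$ with the edgeless hypergraph on $V(H_2)\setminus V(H_1)$, which has independence density $1$. Applying (1) to $H_1'\subseteq H_2$ and then (2) to the decomposition of $H_1'$ gives
\[
\mathrm{id}(H_2) \;\le\; \mathrm{id}(H_1') \;=\; \mathrm{id}(H_1)\cdot 1 \;=\; \mathrm{id}(H_1).
\]

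I do not anticipate a real obstacle here: the main thing to be careful about is the distinction between \emph{spanning} subhypergraph (same vertex set, fewer hyperedges) and \emph{subhypergraph} (possibly fewer vertices), which is exactly what forces the padding step in~(3) and explains why (3) is a statement about $\mathrm{id}$ rather than about $i$.
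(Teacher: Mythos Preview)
Your proposal is correct and follows essentially the same approach as the paper: parts (1) and (2) are handled identically, and for (3) both you and the paper pad $H_{1}$ to a spanning subhypergraph of $H_{2}$, apply (1), and then factor via (2). The only cosmetic difference is that the paper pads with the induced subhypergraph $H_{2}-H_{1}$ (and then bounds $\mathrm{id}(H_{2}-H_{1})\le 1$), whereas you pad with the edgeless hypergraph on $V(H_{2})\setminus V(H_{1})$ (so that the extra factor is exactly $1$); both choices yield the same chain of inequalities.
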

\begin{proof}

For (1), independent sets of $H_2$ are also independent sets of $H_1.$ For the first equality of (2), note that the union of independent sets in $H_1$ and $H_2$ is independent in $H_{1} \cup H_{2};$ the second equality holds by definition. For (3), let the number of vertices of $H_{i}$ be $n_{i}$, so that $n_{1}\leq n_{2}$. Then we have that
\begin{eqnarray*}
\mathrm{id}(H_{2}) &=&\frac{i(H_{2})}{2^{n_{2}}} \\
&\leq &\frac{i(H_{1}\cup (H_{2}-H_{1}))}{2^{n_{2}}} \\
&=&\frac{i(H_{1})i(H_{2}-H_{1}))}{2^{n_{1}}2^{n_{2}-n_{1}}} \\
&=&\mathrm{id}(H_{1})\mathrm{id}(H_{2}-H_{1}) \\
&\leq &\mathrm{id}(H_{1}),
\end{eqnarray*}
where the first inequality holds by item (1), and the second equality holds by (2). \end{proof}

Let $\mathcal{C} = (H_n:n\ge 0)$ be a family of subhypergraphs of $H$ with the properties that for every integer $n\ge 0$, $H_n$ is an induced subhypergraph of $H_{n+1},$ and
\begin{equation*}
V(H)=\bigcup\limits_{n=0}^{\infty}V(H_{n}).
\end{equation*}
We write $\lim_{n\rightarrow \infty }H_{n}=H,$ and say that $H$ is the \emph{limit of the chain} $\mathcal{C}.$ Given a chain $\mathcal{C}=(H_n: n\ge 0)$ whose limit is $H$ we define the \emph{independence density of $H$ relative to $\mathcal{C}$} by
$$
\mathrm{id}(H,\mathcal{C})=\lim_{n\rightarrow \infty} \mathrm{id}(H_n).
$$

\begin{theorem}\label{uni} Let $H$ be a countable hypergraph.
\begin{enumerate}
\item For all chains $\mathcal{C}$ with limit $H,$ $\mathrm{id}(H,\mathcal{C})$ exists.
\item Let $\mathcal{C}=(H_{n}: n\geq 0)$ and $\mathcal{C}'=(J_{n}: n\geq 0)$ be two chains with the same limiting hypergraph $H.$ Then $$\mathrm{id}(H,\mathcal{C})=\mathrm{id}(H,\mathcal{C}').$$
\end{enumerate}
\end{theorem}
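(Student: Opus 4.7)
The plan is to handle (1) by monotone convergence and then leverage that, via an interleaved chain argument, to prove (2).

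For (1), since $H_n$ is an induced (hence ordinary) subhypergraph of $H_{n+1}$, Lemma~\ref{lem}(3) gives $\mathrm{id}(H_{n+1}) \le \mathrm{id}(H_n)$. Therefore $(\mathrm{id}(H_n))_{n\ge 0}$ is non-increasing and bounded below by $0$, so it converges and $\mathrm{id}(H,\mathcal{C})$ exists.

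For (2), I would build a common refinement of $\mathcal{C}$ and $\mathcal{C}'$ by alternating between them. Set $n_1 = 0$. Since $V(H_{n_1})$ is finite and $V(H) = \bigcup_m V(J_m)$ with $V(J_m) \subseteq V(J_{m+1})$, there is an index $m_1$ with $V(H_{n_1}) \subseteq V(J_{m_1})$. Using $V(H) = \bigcup_n V(H_n)$ in the same way, pick $n_2 > n_1$ with $V(J_{m_1}) \subseteq V(H_{n_2})$, then $m_2 > m_1$ with $V(H_{n_2}) \subseteq V(J_{m_2})$, and so on. This yields strictly increasing index sequences $(n_k)$ and $(m_k)$ satisfying
\begin{equation*}
V(H_{n_1}) \subseteq V(J_{m_1}) \subseteq V(H_{n_2}) \subseteq V(J_{m_2}) \subseteq \cdots.
\end{equation*}

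Because each hyperedge of $H$ is finite and each $H_{n_k}$, $J_{m_k}$ is induced in $H$, every vertex-set inclusion above lifts to a subhypergraph inclusion: any hyperedge of $H_{n_k}$ is a hyperedge of $H$ contained in $V(J_{m_k})$, hence is a hyperedge of $J_{m_k}$, and symmetrically. Applying Lemma~\ref{lem}(3) along this interleaved chain gives
\begin{equation*}
\mathrm{id}(H_{n_1}) \ge \mathrm{id}(J_{m_1}) \ge \mathrm{id}(H_{n_2}) \ge \mathrm{id}(J_{m_2}) \ge \cdots,
\end{equation*}
a non-increasing sequence bounded below, hence convergent. Its subsequence $(\mathrm{id}(H_{n_k}))$ is a subsequence of $(\mathrm{id}(H_n))$ and so must converge to $\mathrm{id}(H,\mathcal{C})$; likewise $(\mathrm{id}(J_{m_k}))$ converges to $\mathrm{id}(H,\mathcal{C}')$. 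Since both share the limit of the combined sequence, $\mathrm{id}(H,\mathcal{C}) = \mathrm{id}(H,\mathcal{C}')$.

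The only real subtlety is promoting vertex-set containment to subhypergraph containment along the interleaved chain; this is exactly where the finiteness of each hyperedge and the induced nature of the $H_n$ and $J_m$ in $H$ are used. Everything else reduces to monotone convergence and subsequence extraction.
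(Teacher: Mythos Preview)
Your proof is correct. Part (1) is identical to the paper's argument. For part (2), the paper uses a direct $\varepsilon$-argument: given $\varepsilon>0$, find $n$ with $\mathrm{id}(H_n)\le x+\varepsilon$, then find $k$ with $H_n$ an induced subhypergraph of $J_k$, whence $x'\le \mathrm{id}(J_k)\le \mathrm{id}(H_n)\le x+\varepsilon$; symmetry and $\varepsilon\to 0$ finish. You instead build an interleaved chain and extract common subsequences. Both approaches rest on exactly the same observation---that a finite induced subhypergraph appearing in one chain is eventually a subhypergraph of a term in the other, so Lemma~\ref{lem}(3) applies---and neither is materially stronger than the other. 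The paper's version is a little shorter; yours makes the ``common refinement'' structure explicit and avoids the $\varepsilon$ bookkeeping.
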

\begin{proof}
Item (1) follows from Lemma~\ref{lem} (3), as the independence densities of a chain form a non-increasing, bounded sequence in $[0,1].$ For (2), fix $\varepsilon > 0$ a real number. Let $x=\mathrm{id}(H,\mathcal{C})$ and $x'=\mathrm{id}(H,\mathcal{C}').$ There is an $n\ge 0$ such that $\mathrm{id}(H_n) \le x+\varepsilon$. We have that $H_n$ is an induced subhypergraph of $J_k$ for some $k\ge 0$. But then Lemma~\ref{lem} (3) implies that
\[ x' \le \mathrm{id}(J_k) \le \mathrm{id}(H_n) \le x+\varepsilon.\]
By symmetry, $x \le x'+\varepsilon.$ The results follows by letting $\varepsilon$ tending to $0$.
\end{proof}

Given Theorem~\ref{uni}, without loss of generality we drop reference to chains, and simplify refer to the \emph{independence density of }$H,$ written $\mathrm{id}(H).$ For an example, consider the hypergraph $H$ with a single edge of size $k$ and infinitely many isolated vertices. In this case, $\mathrm{id}(H) = 1-1/2^k$.

We give a generalization of independence density that will be useful in the next section. For any two finite, disjoint sets $A,B \subseteq V(H)$, denote by $\rho_{A,B} (H)$ the density of independent sets (in all the subsets of $H$) with the property that these independent sets contain all vertices from $A$ and no vertex from $B$. It is straightforward to see that Theorem~\ref{uni} can be generalized to this definition, and so $\rho_{A,B} (H)$ does not depend on the chain used. Indeed, consider a chain $\mathcal{C}=(H_n: n\ge 0)$ whose limit is $H$ such that $A \cup B \subseteq V(H_0)$. Every independent set in $H_n$ under consideration can be extended in at most $2^{r_n}$ ways, where $r_n = |V(H_{n+1}) \setminus V(H_n)|$ (by considering all possible subsets of $V(H_{n+1}) \setminus V(H_n)$). If every extension forms an independent set, then the density remains the same; otherwise it decreases. This shows that the generalized independence densities of a chain form a non-increasing, bounded sequence. The uniqueness of the limit follows by an argument analogous to the one given in the proof of Theorem~\ref{uni}~(2). Finally, let us note that $\rho_{\emptyset,\emptyset}(H) = \mathrm{id}(H)$ and so it is a natural generalization of independence density.

\bigskip

We now give some bounds on the independence density in terms of the matching number. For a hypergraph $H,$ the \emph{matching number} of $H$, written $\mu(H)$, is the supremum of the cardinalities of pairwise non-intersecting hyperedges in $H.$ In Section~\ref{unbs} (see Theorem~\ref{iii}) we provide an example of a hypergraph with infinite matching number, hyperedges of unbounded size, and with $\mathrm{id}(H)>0$. However, if $\mu(H)$ is infinite and $H$ has hyperedges whose cardinalities are bounded above by a universal constant $k\in \mathbb{N}^+$, then it follows from the next result that $\mathrm{id}(H)=0.$

\begin{theorem}\label{bounds} Suppose that $H$ is a countable hypergraph whose hyperedges have cardinality bounded above by $k>0.$ If $\mu(H)$ is finite, then
$$0 < 2^{-k\mu(H)} \le \mathrm{id}(H) \le \left(1-1/2^k\right)^{\mu(H)}.$$
\end{theorem}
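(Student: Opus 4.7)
I would prove each inequality separately by exploiting a fixed maximum matching. Since $\mu(H)$ is finite, the supremum is attained, so fix pairwise disjoint hyperedges $e_1,\ldots,e_m$ of $H$ with $m = \mu(H)$, and set $U = e_1 \cup \cdots \cup e_m$; note $|U| \le km$. Choose a chain $\mathcal{C} = (H_j : j \ge 0)$ with limit $H$ such that $U \subseteq V(H_0)$ (any chain may be modified to satisfy this by enlarging its initial term), and write $n_j = |V(H_j)|$. Each $e_i$ is then a hyperedge of every $H_j$.

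For the lower bound, I would use maximality: every hyperedge of $H$ must meet $U$, for otherwise it could be added to $\{e_1,\ldots,e_m\}$ to produce a strictly larger matching, contradicting $m = \mu(H)$. Hence every subset of $V(H_j) \setminus U$ is an independent set of $H_j$, giving
$$
\mathrm{id}(H_j) \;\ge\; \frac{2^{n_j - |U|}}{2^{n_j}} \;=\; 2^{-|U|} \;\ge\; 2^{-km}.
$$
Taking $j \to \infty$ yields $\mathrm{id}(H) \ge 2^{-k\mu(H)} > 0$.

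For the upper bound, I would pass to a spanning subhypergraph. Let $H_j'$ be the hypergraph with vertex set $V(H_j)$ and hyperedge set $\{e_1, \ldots, e_m\}$. By Lemma~\ref{lem}(1), $\mathrm{id}(H_j) \le \mathrm{id}(H_j')$. Since the $e_i$ are pairwise disjoint, $H_j'$ is the disjoint union of the $m$ single-edge hypergraphs supported on $e_1,\ldots,e_m$ together with $n_j - |U|$ isolated vertices. A single hyperedge of size $s$ has independence density $1-2^{-s}$, so by Lemma~\ref{lem}(2),
$$
\mathrm{id}(H_j') \;=\; \prod_{i=1}^{m} \bigl(1 - 2^{-|e_i|}\bigr) \;\le\; \bigl(1 - 2^{-k}\bigr)^{m},
$$
where the inequality uses $|e_i| \le k$ for each $i$. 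Letting $j \to \infty$ delivers $\mathrm{id}(H) \le (1 - 2^{-k})^{\mu(H)}$.

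There is no serious obstacle here; the proof is essentially a direct application of Lemma~\ref{lem} once a maximum matching is in hand. The only points requiring mild care are arranging $U \subseteq V(H_0)$ so that the same $m$ disjoint edges persist through the chain, and invoking maximality (rather than merely maximality in each $H_j$) to ensure every hyperedge of $H$ meets $U$ in the lower-bound argument.
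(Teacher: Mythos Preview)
Your proof is correct and follows essentially the same approach as the paper: both arguments fix a maximum matching $M$ with union $U$, obtain the lower bound by observing that every subset of $V(H_j)\setminus U$ is independent (since maximality forces every hyperedge to meet $U$), and obtain the upper bound by comparing $H$ to the sparser hypergraph retaining only the matching edges and invoking Lemma~\ref{lem}. Your write-up is in fact slightly more explicit than the paper's in justifying the independence of $V(H_j)\setminus U$ and in handling the case $|e_i|<k$ in the product.
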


\begin{proof} The upper bound follows by noting that by Lemma~\ref{lem}~(3), $\mathrm{id}(H)$ is maximized when $H$ consists of $\mu(H)$ many disjoint hyperedges of cardinality $k$. As a hypergraph consisting of a single hyperedge of cardinality $k$ has density $1-1/2^k$, the proof of the upper bound follows by Lemma~\ref{lem}~(2).

For the lower bound, fix a maximum matching $M$ of $H.$ Now fix a chain $(H_n: n \in \mathbb{N})$ with limit $H$, where $H_0$ is the subhypergraph induced by $M.$ If $H_n$ has order $N$, then we have that $\mathrm{id}(H_n) \ge 2^{N-k\mu(H)}/2^N$, as the vertices not in $M$ form an independent set. The lower bound follows.
\end{proof}

The lower and upper bounds are equal in the case for $k=1$ in Theorem~\ref{bounds}. For graphs $G$ (that is, the case $k=2$) it was shown in~\cite{bbkp} that $(\mu(G)+1)2^{-2\mu(G)}$ is a lower bound for the independence density. Note that this bound is sharp for $G=K_{2m+1}$, if $m>0$ is an integer. It is an open problem to improve the lower bound in Theorem~\ref{bounds} if $k>2$.

\section{Rationality}

In this section, we consider hypergraphs where hyperedges have bounded cardinality.
For $k>0,$ define
\begin{equation*}
\mathbf{H}_k=\{ x\in [0,1]\text{: for some countable }H\in \mathcal{H}_k, \text{ }\mathrm{id}(H)=x \}.
\end{equation*}
(Recall that $\mathcal{H}_k$ is the set of countable hypergraphs whose hyperedges have cardinality at most $k$.) One of our main results is that the set of densities is rational, a fact not obvious a priori. The next theorem generalizes the result in~\cite{bbkp} which proved that independent densities of graphs are rational.

\begin{theorem}\label{rat} For all $k \ge 1$,
$$\mathbf{H}_k \subseteq \mathbb{Q} \cap [0,1].$$
\end{theorem}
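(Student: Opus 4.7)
The plan is to prove the theorem by induction on $k$, using the hypergraph matching number to reduce the edge-size bound by one at each inductive step.

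\emph{Base case $k=1$.} Here every hyperedge is a singleton, so a set $S\subseteq V(H)$ is independent iff $S$ avoids $X:=\{v\in V(H):\{v\}\in E(H)\}$. Along any chain $(H_n)$ with limit $H$ we obtain $\mathrm{id}(H_n)=2^{-|X\cap V(H_n)|}$, so $\mathrm{id}(H)=2^{-|X|}$ (with the convention $2^{-\infty}=0$). This is rational.

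\emph{Inductive step $k\ge 2$.} Assume the result for $\mathcal{H}_{k-1}$ and fix $H\in\mathcal{H}_k$. If $\mu(H)=\infty$, then for each $m\ge 1$ there is a matching of $m$ hyperedges in $H$, and the induced subhypergraph on its vertex set has density at most that of the disjoint union of these hyperedges, which by Lemma~\ref{lem}(1)--(2) is at most $(1-1/2^k)^m$. Lemma~\ref{lem}(3) then gives $\mathrm{id}(H)\le(1-1/2^k)^m$ for every $m$, so $\mathrm{id}(H)=0\in\mathbb{Q}$.

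Otherwise $\mu(H)<\infty$. Fix a maximum matching $M$ of $H$ and set $V(M)=\bigcup_{e\in M}e$, a finite set. The crucial structural observation is that every hyperedge $g\in E(H)$ satisfies $g\cap V(M)\neq\emptyset$: otherwise $M\cup\{g\}$ would contradict the maximality of $M$. For each $T\subseteq V(M)$ let
\[ E_T=\{\,g\setminus V(M)\ :\ g\in E(H),\ g\setminus V(M)\neq\emptyset,\ g\cap V(M)\subseteq T\,\}, \]
and let $H_T$ be the hypergraph on $V(H)\setminus V(M)$ with hyperedge set $E_T$. Since $|g\cap V(M)|\ge 1$, every hyperedge of $H_T$ has cardinality at most $k-1$, so $H_T\in\mathcal{H}_{k-1}$.

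Pick a chain $(H_n)$ with limit $H$ and $V(M)\subseteq V(H_0)$, and let $\mathcal{T}$ be the finite family of $T\subseteq V(M)$ that contain no hyperedge of $H$. Partitioning each $i(H_n)$ by the value $T=S\cap V(M)$, only $T\in\mathcal{T}$ give nonzero contribution, and for such $T$ the independent sets of $H_n$ that intersect $V(M)$ in $T$ are in bijection with the independent sets of the analogously defined restriction $(H_n)_T$, yielding
\[ \mathrm{id}(H_n)=2^{-|V(M)|}\sum_{T\in\mathcal{T}}\mathrm{id}((H_n)_T). \]
The chain $((H_n)_T:n\ge 0)$ has limit $H_T$, so by Theorem~\ref{uni} we may pass to the limit to get
\[ \mathrm{id}(H)=2^{-|V(M)|}\sum_{T\in\mathcal{T}}\mathrm{id}(H_T). \]
By the inductive hypothesis each $\mathrm{id}(H_T)\in\mathbb{Q}$, and since $\mathcal{T}$ is finite, $\mathrm{id}(H)\in\mathbb{Q}$.

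The main obstacle is arranging the reduction from $k$ to $k-1$. Conditioning on a single hyperedge or a single vertex leaves arbitrarily many size-$k$ hyperedges untouched, so no naive local conditioning suffices. The maximality of $M$ is exactly what makes the argument work: it forces every hyperedge of $H$ to touch the finite core $V(M)$, so that after conditioning on the content of $V(M)$, every residual hyperedge has shed at least one vertex and the inductive hypothesis applies.
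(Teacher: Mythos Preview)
Your proof is correct and uses the same key idea as the paper: if the matching number is infinite the density is $0$, and otherwise every hyperedge meets the (finite) vertex set $V(M)$ of a maximum matching, so conditioning on $S\cap V(M)$ drops the maximum residual hyperedge size by at least one. The paper carries out this recursion via auxiliary densities $\rho_{A,B}^r(H)$ and ``out-sets'' (its four Claims), whereas you package the same step as a clean induction on $k$ by building the residual hypergraphs $H_T\in\mathcal{H}_{k-1}$ explicitly; the decompositions coincide (your $\mathrm{id}(H)=2^{-|V(M)|}\sum_{T\in\mathcal{T}}\mathrm{id}(H_T)$ is exactly the paper's $\rho_{\emptyset,\emptyset}^k(H)=\sum_{C\in 2^W}\rho_{C,W\setminus C}^{k-1}(H)$ with $W=V(M)$), and your formulation is slightly more direct.
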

\begin{proof}
Fix a countable hypergraph $H$ whose hyperedges have at most $k$ vertices. We will show that $\mathrm{id}(H) \in \mathbb{Q} \cap [0,1]$. In the case $k=1,$ we leave it to the reader to verify that $\mathrm{id}(H)$ is either $0$, $1$, or $1/2^m$, for $m\ge 1$ an integer.  We assume in the remainder of the proof, therefore, that $k\ge 2.$

Let us recall that for any two finite, disjoint sets $A,B \subseteq V(H)$, we denote by $\rho_{A,B} (H)$ the density of independent sets with the property that these independent sets contain all vertices from $A$ and no vertex from $B$. For a given $A,B$, and any hyperedge $S$ such that $S \cap B = \emptyset$, the set $S \setminus A$ will be called the \emph{out-set of $S$ relative to $A$ and $B$} (we drop reference to $A$ and $B$ if they are clear from context). Note that for any $A,B$, and any $x \in V(H) \setminus (A \cup B)$, we have that
\begin{equation}\label{eq:incl_excl}
\rho_{A,B}(H) = \rho_{A \cup \{x\},B}(H) + \rho_{A,B\cup \{x\}}(H),
\end{equation}
as any independent set containing all $A$ and no $B$ either contains the vertex $x$, in which case this set is counted by $ \rho_{A \cup \{x\},B}(H)$, or it does not contain it, in which case this set is counted by  $\rho_{A,B\cup \{x\}}(H)$.  Moreover,~(\ref{eq:incl_excl}) can be generalized to obtain that for any finite set $W \subseteq V(H) \setminus (A \cup B)$
\begin{equation}\label{eq:incl_excl2}
\rho_{A,B}(H) = \sum_{C \in 2^W} \rho_{A \cup C, B \cup (W \setminus C)}(H).
\end{equation}

We introduce the following useful notation. For given $H$, $A$ and $B$, let $R(H,A,B)$ be the size of the largest out-set (with respect to $H$, $A$ and $B$); observe that $R(H,A,B)$ is bounded above by $k$. We will use the notation $\rho_{A,B}^r (H)$  instead of $\rho_{A,B} (H)$ to stress the fact that every out-set (relative to $A$ and $B$) has at most $r$ vertices, so that implicitly by writing $\rho_{A,B}^r (H)$ we obtain that $R(H,A,B) \leq r$. Note that $\mathrm{id}(H)=\rho_{\emptyset,\emptyset}^k(H)$, and so our goal is then to show that $\rho_{\emptyset,\emptyset}^k(H)$ is rational.

From now on we focus on given finite, disjoint sets $A, B \subset V(H)$. We will prove a few claims that will cover all possible cases.

\bigskip
\emph{Claim~1}: If $A$ is not independent, then $\rho_{A,B}^r(H)=0$.
\smallskip

The first claim is obvious, as there exists no subset containing all vertices of $A$ and which is independent.

\bigskip
\emph{Claim~2}: If $A$ is independent and there is an infinite family of disjoint out-sets, then $\rho_{A,B}^r(H)=0$.
\smallskip

To prove the second claim, let $\mathcal{O}_1,\mathcal{O}_2,\ldots, \mathcal{O}_t \subseteq V(H)$ be a finite family of $t$ disjoint out-sets corresponding to the hyperedges $S_1,S_2,\ldots, S_t$ of $H$ which have empty intersection with $B$, all of sizes at most $r$. Since from any set $\mathcal{O}_i$ ($i \in \{1, 2, \ldots, t\}$) at least one subset of it (namely the whole set $\mathcal{O}_i$) will produce a hyperedge when merged with $A$, we have that
\begin{eqnarray*}
\rho_{A,B}^r(H) &\leq & 2^{-(|A|+|B|)} \prod_{i=1}^{t} \left(1- \frac{1}{2^{|\mathcal{O}_i|}}\right)  \\
                &\leq & 2^{-(|A|+|B|)} \left(1-\frac{1}{2^r} \right)^{t}.
\end{eqnarray*}

As $\lim_{t \rightarrow \infty} \left(1-\frac{1}{2^r}\right)^{t}=0$ the proof of the claim follows.

\bigskip
\emph{Claim~3}: If $A$ is independent, then $\rho_{A,B}^0(G)=2^{-(|A|+|B|)}$.
\smallskip

The third claim is obvious. Since the assumption is that each out-set has size at most $0$ there exists no out-set. This implies that each set containing $A$ that has empty intersection with $B$ is independent.

\bigskip
\emph{Claim~4}: Suppose that $A$ is independent and there is no infinite family of disjoint out-sets. If $\mathcal{O}_1, \mathcal{O}_2,\ldots,\mathcal{O}_s$ is a maximal family of disjoint out-sets (finite by assumption, perhaps empty), then for every $r>0$ we have that
$$
\rho_{A,B}^r(H) = \sum_{C \in 2^W} \rho_{A \cup C, B \cup (W \setminus C)}^{r-1}(H),
$$
where $W=\bigcup_{i=1}^s \mathcal{O}_i$.\\
(Note that, since $|W| \le sr < \infty$, the sum contains only finitely many terms.)
\smallskip

In order to prove the fourth claim, we apply~(\ref{eq:incl_excl2}). Since it is assumed that each out-set has at most $r$ vertices we can modify~(\ref{eq:incl_excl2}) as follows.
$$
\rho_{A,B}^r(H) = \sum_{C \in 2^W} \rho_{A \cup C, B \cup (W \setminus C)}^{r}(H).
$$
The crucial observation is that in our current setting $r$ on the right hand side can be (in fact, must be) replaced by $r-1$.  Note that out-sets which are hyperedges that are disjoint with $B$ must have non-empty intersection with some member of the family $\{\mathcal{O}_i: 1\le i \le s\}$ (since the family is maximal). Therefore, for any partition of $W$ into $C$ and $W \setminus C$, every out-set with respect to $A \cup C$ and $B \cup (W \setminus C)$ has at most $r-1$ vertices.  The claim follows.

\bigskip

With the claims in hand, we complete the proof of the theorem. Beginning with the term $\rho_{\emptyset,\emptyset}^k(H)$, we first fix a maximal family of $s$ disjoint out-sets. It might happen that $s$ is infinite, but in that case $\rho_{\emptyset,\emptyset}^k(H)=0$ by Claim~2.

Now assume that $s$ is finite. Since $A=\emptyset$ and $B=\emptyset$, this is just a collection of disjoint hyperedges $\mathcal{O}_1, \mathcal{O}_2,\ldots,\mathcal{O}_s$, with $W=\bigcup_{i=1}^s \mathcal{O}_i$.  We may apply Claim~4 to derive that
$$\rho_{\emptyset,\emptyset}^k(H) = \sum_{C \in 2^W} \rho_{C, W \setminus C}^{k-1}(H).$$
We will apply recursively one of the claims that corresponds to the terms in the sum. At each step, the term is either a rational number (Claims~1, 2, or 3), or a sum of finitely many terms which we can deal with independently (Claim 4). The recursive process ends after $k$ steps. We therefore have that $\mathrm{id}(H)=\rho_{\emptyset,\emptyset}^k(H)$ is rational.
\end{proof}

We now have the following corollary which is not obvious a priori.

\begin{corollary}\label{jjj}
If $\mathcal{F}$ is a finite set of finite graphs, then the $\mathcal{F}$-free density of a graph is rational.
\end{corollary}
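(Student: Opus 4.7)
The plan is to deduce this immediately from Theorem~\ref{rat} by translating the graph-theoretic statement into a statement about independence densities of a suitably constructed hypergraph, exactly as suggested in the introduction. First, since $\mathcal{F}$ is finite and every member is a finite graph, the quantity $k = \max\{|V(F)| : F \in \mathcal{F}\}$ is a well-defined positive integer. For any graph $G$ under consideration, I would form the hypergraph $H_{G,\mathcal{F}}$ on vertex set $V(G)$ whose hyperedges are exactly those subsets $S \subseteq V(G)$ such that the subgraph of $G$ induced by $S$ is isomorphic to some member of $\mathcal{F}$.

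By construction each hyperedge has cardinality at most $k$, so $H_{G,\mathcal{F}} \in \mathcal{H}_k$. Moreover, a set $T \subseteq V(G)$ induces an $\mathcal{F}$-free subgraph of $G$ if and only if $T$ contains no hyperedge of $H_{G,\mathcal{F}}$, that is, precisely when $T$ is independent in $H_{G,\mathcal{F}}$. Consequently the $\mathcal{F}$-free density of $G$ coincides with $\mathrm{id}(H_{G,\mathcal{F}})$. In the countable case, any chain of finite induced subgraphs of $G$ produces a corresponding chain of finite induced subhypergraphs of $H_{G,\mathcal{F}}$ with the same limit, and Theorem~\ref{uni} guarantees that the limiting density is independent of the chosen chain. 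Applying Theorem~\ref{rat} to $H_{G,\mathcal{F}} \in \mathcal{H}_k$ then yields $\mathrm{id}(H_{G,\mathcal{F}}) \in \mathbb{Q} \cap [0,1]$, which is the desired conclusion.

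There is essentially no obstacle here beyond verifying the correct translation between induced-subgraph containment and hypergraph incidence. The key point on which the argument hinges is the uniform bound $k$ on hyperedge cardinality, which requires exactly the hypothesis that $\mathcal{F}$ is finite (and consists of finite graphs); without this, one would leave the scope of Theorem~\ref{rat}, and indeed the results of Section~\ref{unbs} suggest rationality may fail in that regime.
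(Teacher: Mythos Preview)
Your proposal is correct and is precisely the argument the paper intends: the corollary is stated without a separate proof because it follows immediately from Theorem~\ref{rat} via the hypergraph $H_{G,\mathcal{F}}$ described in the introduction, and your write-up makes this translation explicit, including the key observation that finiteness of $\mathcal{F}$ yields the uniform bound $k$ on hyperedge size.
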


Note that the proof of Theorem~\ref{rat} actually proves that the independence density of a hypergraph is a sum of dyadic rationals. However, dyadic rationals form a dense subset of the real numbers. The closure of the set of graph independence densities is rational, as proved in \cite{bbkp}. With Theorem~\ref{rat} in hand, we provide a short alternative proof of this result. It remains open, however, to determine if the closure of $\mathbf{H}_k$ is a subset of the rationals for $k>2$.

\begin{theorem}\label{closure_k=2}
The closure of $\mathbf{H}_2$ is a subset of $\mathbb{Q} \cap [0,1]$.
\end{theorem}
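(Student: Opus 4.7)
Let $x \in \overline{\mathbf{H}_2}$, written as $x = \lim \mathrm{id}(H_n)$ for some $H_n \in \mathcal{H}_2$. The plan is to construct a single countable $H \in \mathcal{H}_2$ with $\mathrm{id}(H) = x$; Theorem~\ref{rat} then yields $x \in \mathbb{Q}$ (and in fact the argument shows the stronger statement that $\mathbf{H}_2$ is closed). The case $x = 0$ is immediate by taking $H$ to be an infinite matching, so assume $x > 0$.

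First I peel off the singleton hyperedges. Each such hyperedge in $H_n$ forces a factor of $1/2$ in the density, so the number $f_n$ of vertices lying in a singleton hyperedge of $H_n$ satisfies $(1/2)^{f_n} \ge \mathrm{id}(H_n) \to x$ and is bounded. Passing to a subsequence, $f_n = f$ is constant; writing $G_n$ for the graph obtained from $H_n$ by deleting those forbidden vertices, one has $\mathrm{id}(G_n) \to 2^f x =: y \in (0,1]$. It suffices to realize $y$ as $\mathrm{id}(G)$ for a countable graph $G$, since then $H := G \sqcup F$, with $F$ a disjoint set of $f$ vertices each carrying its own singleton hyperedge, lies in $\mathcal{H}_2$ and satisfies $\mathrm{id}(H) = y/2^f = x$.

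To produce $G$, extract a limit structure by compactness. By Theorem~\ref{bounds}, $\mu(G_n)$ is uniformly bounded (otherwise $(3/4)^{\mu(G_n)} \ge \mathrm{id}(G_n) \to 0$). Fix maximum matchings $M_n$; by maximality each $V(G_n) \setminus V(M_n)$ is an independent set. Pass to a subsequence so that $|V(M_n)| = 2\mu$ is constant and, after a fixed relabelling onto a common vertex set $V(M)$, $G_n[V(M_n)]$ is a single graph $G_0$. For each non-empty $S \subseteq V(M)$, let $m_{n,S}$ count the vertices of $V(G_n) \setminus V(M_n)$ whose neighbourhood in $V(M_n)$ is exactly $S$. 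Since there are only $2^{2\mu}-1$ choices of $S$, a further diagonal subsequence gives $m_{n,S} \to m_S \in \mathbb{N} \cup \{\infty\}$ for every $S$. Build $G$ by taking $G_0$ on $V(M)$ and, for each non-empty $S$, attaching $m_S$ new ``type-$S$'' vertices (countably many when $m_S = \infty$), each adjacent to exactly $S$ in $V(M)$. Then $G$ is a countable graph.

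The last step is to check $\mathrm{id}(G) = y$ via the density formula
\[
\mathrm{id}(J) \,=\, 2^{-|V(M)|} \sum_{A \in \mathcal{I}} 2^{-|N_J(A)|},
\]
valid for any graph $J$ with $V(M) \subseteq V(J)$ finite and $V(J) \setminus V(M)$ independent, where $\mathcal{I}$ is the set of independent subsets of $V(M)$ in $J[V(M)]$, $N_J(A) \subseteq V(J) \setminus V(M)$ is the neighbourhood of $A$, and $2^{-\infty} := 0$. For finite $J$ this is a direct count; it extends to countable $J$ by a chain argument. For our $G$ one has $|N_G(A)| = \sum_{S \,:\, S \cap A \neq \emptyset} m_S$, and the analogous formula holds for each $G_n$ with $m_{n,S}$ in place of $m_S$. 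Because $\mathcal{I}$ is finite and $t \mapsto 2^{-t}$ is continuous on $[0,\infty]$, term-by-term passage to the limit gives $\mathrm{id}(G) = \lim_n \mathrm{id}(G_n) = y$, completing the construction. The main obstacle is the precise statement and chain-based extension of the density formula above; once that is in hand, the rest is standard compactness bookkeeping.
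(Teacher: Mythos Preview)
Your argument is correct and in fact proves the stronger statement that $\mathbf{H}_2$ is closed, which then combines with Theorem~\ref{rat} to give the result. The density formula is valid: for finite $J$ it is the count you describe, and for countable $J$ one takes a chain $(J_m)$ containing $V(M)$ and uses that $|N_{J_m}(A)| \nearrow |N_J(A)|$ monotonically, so the finite sum over $A \in \mathcal{I}$ passes to the limit. Vertices outside $V(M_n)$ with empty neighbourhood in $V(M_n)$ are isolated (since $V(G_n)\setminus V(M_n)$ is independent) and so do not affect the density, which justifies restricting to non-empty $S$.

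This is a genuinely different route from the paper's proof. The paper fixes an irrational $q$ and argues directly that some $\varepsilon$-neighbourhood of $q$ misses $\mathbf{H}_2$: either $\mu(H)$ is large enough to force $\mathrm{id}(H) \le q/2$, or $\mu(H)$ is bounded and then Claim~4 of Theorem~\ref{rat} expresses $\mathrm{id}(H)$ as a sum of at most $2^{2M-2}$ dyadic terms, so the binary expansion of $\mathrm{id}(H)$ has a bounded number of ones and such numbers cannot accumulate at $q$. Your compactness-and-limit-object approach is more structural: it extracts an actual countable hypergraph realizing the limit density, and thereby shows $\mathbf{H}_2 = \overline{\mathbf{H}_2}$, which the paper's argument does not yield. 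On the other hand, the paper's method gives quantitative information (a bound on the number of ones in the binary expansion in terms of the distance to zero) that your construction does not make explicit.
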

\begin{proof}
Fix $q \in (0,1) \setminus \mathbb{Q}$. We will use the argument used in the proof of Theorem~\ref{rat} to deduce that $q$ is not in the closure of $\mathbf{H}_2$.

Let $H = (V,E) \in \mathcal{H}_2$ be any graph. If
$$
\mu(H) \ge M = M(q) = \left\lceil \frac {\log (q/2)}{\log(3/4)} \right\rceil,
$$
then it follows from Theorem~\ref{bounds} that $\mathrm{id}(H) \le (3/4)^{\mu(H)} \le q/2$. On the other hand, if $\mu(H) < M$, then we may apply Claim~4 in the proof of Theorem~\ref{rat} to derive that there exists $W \subseteq V$ with $|W| \le 2(M-1)$ such that
\begin{equation}\label{uuu}
\mathrm{id}(H) = \rho_{\emptyset,\emptyset}^2(H) = \sum_{C \in 2^W} \rho_{C, W \setminus C}^{1}(H).
\end{equation}
The sum on the right hand side of~(\ref{uuu}) has at most $2^{2M-2}$ terms, each term is either equal to 0 (when the corresponding set $C$ is not independent or the partition $(C, W \setminus C)$ yields an infinite family of disjoint outsets) or of the form $1/2^i$ for some $i \in \mathbb{N}$ (when the partition $(C, W \setminus C)$ yields a finite family of outsets, with each outset consisting of one vertex).
Combining the two cases together we obtain that $H$ must either have $\mathrm{id}(H) \le q/2$ or have at most $2^{2M-2}$ ones in the binary expansion of $\mathrm{id}(H)$. This implies that there exists $\varepsilon = \varepsilon(q)>0$ such that there is no graph with independence density between $q-\varepsilon$ and $q+\varepsilon$. \end{proof}

\section{Irrationality in the case of unbounded hyperedges}\label{unbs}

One major difference between graphs and hypergraphs is that the latter can have variable hyperedge sizes. For infinite hypergraphs, these sizes can, of course, be unbounded. In this section we allow hypergraphs $H$ with finite hyperedges with unbounded (finite) cardinality (including the case of singletons as hyperedges).
We begin by providing (in sharp contrast to Corollary~\ref{jjj} where the sizes of hyperedges are bounded) an explicit example of a countable hypergraph with irrational independence density.

Let $\hat{H} \in \mathcal{H}$ be the hypergraph with vertices $\{x_i: i \in \mathbb{N}^+\}$ and hyperedges $$\{\{x_1\} , \{x_2,x_3\}, \{x_4,x_5,x_6\},\{x_7,x_8,x_9,x_{10}\}, \ldots \}.$$ In particular, if we consider the graph $\hat{G}$ which is the disjoint union of the cliques of order $1$ and $2$, and cycles of all lengths, then $\mathrm{id}(\hat{H})$ equals the acyclic density of $\hat{G}$.

\begin{theorem}\label{iii}
The following holds.
\begin{enumerate}
\item The independence density of $\hat{H}$ is $S= \prod_{k \geq 1} \left(1-\frac{1}{2^k}\right)$.
\item The number $S$ is irrational.
\end{enumerate}
\end{theorem}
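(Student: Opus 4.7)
For part~(1), the plan is to realize $\hat H$ as a disjoint union of its hyperedges and apply the multiplicativity in Lemma~\ref{lem}~(2). Let $E_k$ denote the induced subhypergraph of $\hat H$ whose vertex set is the $k$-th hyperedge, and set $H_n = E_1 \cup E_2 \cup \cdots \cup E_n$. Then $(H_n)_{n\ge 1}$ is a chain with limit $\hat H$, so by Theorem~\ref{uni} it suffices to compute $\lim_n \mathrm{id}(H_n)$. A hypergraph consisting of a single hyperedge of size $k$ has exactly $2^k-1$ independent sets (every subset except the hyperedge itself), so $\mathrm{id}(E_k) = 1 - 2^{-k}$, and iterating Lemma~\ref{lem}~(2) gives $\mathrm{id}(H_n) = \prod_{k=1}^n(1-2^{-k})$. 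Passing to the limit as $n\to\infty$ yields $\mathrm{id}(\hat H) = S$.

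For part~(2), the plan is to combine Euler's pentagonal number theorem with a Liouville-style rationality argument. Euler's identity yields
$$S = \sum_{n \in \mathbb{Z}}(-1)^n 2^{-n(3n-1)/2} = 1 + \sum_{n \ge 1}(-1)^n a_n, \qquad \text{with} \quad a_n = 2^{-n(3n-1)/2} + 2^{-n(3n+1)/2}.$$
A direct check shows $(a_n)_{n\ge 1}$ is strictly decreasing to $0$, so the partial sums $\tilde S_N = 1 + \sum_{n=1}^{N}(-1)^n a_n$ form an alternating sequence converging to $S$ with strictly nonzero, sign-alternating tails; in particular $\tilde S_N \ne S$ for every $N$.

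Now suppose for contradiction that $S = p/q$ for positive integers $p, q$. By construction $\tilde S_N$ is a dyadic rational with denominator dividing $2^{N(3N+1)/2}$, so both $q \cdot 2^{N(3N+1)/2}\tilde S_N$ and $q \cdot 2^{N(3N+1)/2} S = p \cdot 2^{N(3N+1)/2}$ are integers. Hence $D_N := q \cdot 2^{N(3N+1)/2}(\tilde S_N - S) \in \mathbb{Z}\setminus\{0\}$, so $|D_N| \ge 1$. On the other hand, the alternating-series tail bound gives $|\tilde S_N - S| \le a_{N+1} < 2\cdot 2^{-(N+1)(3N+2)/2}$, and since $(N+1)(3N+2)/2 - N(3N+1)/2 = 2N+1$, we obtain $|D_N| < q \cdot 2^{-2N}$. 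Choosing any $N$ with $2^{2N} > q$ yields the contradiction $1 \le |D_N| < 1$, proving that $S$ is irrational.

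The only substantial input is Euler's pentagonal number theorem, which supplies exactly the sparsity needed for the Liouville-style estimate: the gap $2N+1$ between the power of $2$ clearing the denominator of $\tilde S_N$ and the power of $2$ bounding $|\tilde S_N - S|$ is what forces $|D_N| < 1$ eventually. The main delicate point to get right is the bookkeeping for the pentagonal exponents when pairing the $n$-th and $(-n)$-th terms of Euler's series, which is what produces a bona fide strictly alternating sequence $(a_n)$ and the crucial exponent gap of $2N+1$.
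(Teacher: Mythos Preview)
Your proof is correct and follows essentially the same approach as the paper's: part~(1) uses the same chain of unions of the first $n$ hyperedges together with Lemma~\ref{lem}~(2), and part~(2) combines Euler's pentagonal number theorem with a Liouville-type argument exploiting the gap between the denominator of the partial sum and the size of the tail. Your pairing of the $\pm n$ terms into a single strictly decreasing sequence $(a_n)$ is a tidy organizational choice that makes both the alternating-series tail bound and the nonvanishing of $\tilde S_N - S$ more transparent than the paper's direct bookkeeping, but the underlying idea is identical.
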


Before we prove the theorem we make some observations.  As $\log (1-x) = - \sum_{n \geq 1} x^n/n$ for $-1 \leq x < 1$, we derive that
$$
S = \exp \left( \sum_{k\geq 1} \log \left(1-\df{1}{2^k}\right) \right) = \exp \left( - \sum_{k \geq 1} \sum_{n \geq 1} \frac{2^{-kn}}{n}\right).
$$
Since $\sum_{k \geq 1} \sum_{n \geq 1} \frac{2^{-kn}}{n} \le \sum_{k \geq 1} 2^{1-k} \le 4$, it is evident that $-\log S$ is bounded from above (in fact, it is approximately 1.242062) and so $S$ is bounded away from zero (and is roughly $0.288788$). Note that the fact that $\mathrm{id}(\hat{H})>0$ is in stark contrast to the result of Theorem~\ref{bounds}, since $\hat{H}$ has infinite matching number.

After interchanging sums we obtain another useful equality:
$$
S = \exp \left (- \sum_{n \geq 1} \frac{1}{n} \sum_{k \geq 1} (2^{-n})^k \right) = \exp \left( - \sum_{n \ge 1}  \frac {1}{n (2^n - 1)} \right).
$$
We need also the following theorem, which is often referred to as \emph{Euler's Pentagonal Number Theorem}.

\begin{theorem}[\cite{euler}]\label{euler}
For $|q|<1$ we have that
$$\prod_{k\geq 1} (1-q^k) = \sum_{k=-\infty}^\infty (-1)^kq^{k(3k+1)/2}.$$
\end{theorem}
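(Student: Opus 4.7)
My plan is to give Franklin's classical bijective proof. First I would expand the infinite product as a formal power series in $q$ by choosing, for each factor $(1-q^k)$, either the $1$ or the $-q^k$ contribution; this yields
$$
\prod_{k\ge 1}(1-q^k) \;=\; \sum_{\lambda} (-1)^{\ell(\lambda)} q^{|\lambda|},
$$
where $\lambda$ ranges over all partitions into distinct positive parts (including the empty partition), and $\ell(\lambda)$ and $|\lambda|$ denote respectively the number of parts and their sum. The coefficient of $q^n$ is therefore $D_e(n)-D_o(n)$, where $D_e(n)$ and $D_o(n)$ count the partitions of $n$ into an even and odd number of distinct parts. Since this is an identity of formal power series in $q$, it suffices to match coefficients of $q^n$ for every $n\ge 0$; the hypothesis $|q|<1$ ensures absolute convergence of both sides.

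Next I would define a sign-reversing involution $\phi$ on partitions into distinct parts. For a nonempty partition $\lambda=(\lambda_1>\lambda_2>\cdots>\lambda_r)$, set $m(\lambda):=\lambda_r$ (the smallest part) and let $\sigma(\lambda)$ be the length of the maximal staircase at the top, i.e.\ the largest $j$ with $\lambda_i=\lambda_1-(i-1)$ for all $i\le j$. If $m(\lambda)\le\sigma(\lambda)$, then $\phi(\lambda)$ is obtained by deleting the part $\lambda_r$ and adding $1$ to each of $\lambda_1,\ldots,\lambda_{m(\lambda)}$. If $m(\lambda)>\sigma(\lambda)$, then $\phi(\lambda)$ is obtained by subtracting $1$ from each of $\lambda_1,\ldots,\lambda_{\sigma(\lambda)}$ and appending a new smallest part equal to $\sigma(\lambda)$. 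Both moves preserve $|\lambda|$ while changing $\ell(\lambda)$ by exactly one, so whenever $\phi(\lambda)$ is a valid partition into distinct positive parts the pair $\{\lambda,\phi(\lambda)\}$ contributes $0$ to $D_e(n)-D_o(n)$.

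The main obstacle is the case analysis verifying that the two moves are mutually inverse wherever both are legal, and identifying exactly the partitions on which $\phi$ fails to produce a valid partition into distinct positive parts. The checks show that failure occurs precisely in two symmetric situations, both of which force the entire partition to be a staircase: either $r=m(\lambda)=\sigma(\lambda)$, so that the part being deleted collides with the staircase being lifted, in which case $\lambda=(2k-1,2k-2,\ldots,k)$ with $k$ parts and $|\lambda|=k(3k-1)/2$; or $r=\sigma(\lambda)$ and $m(\lambda)=\sigma(\lambda)+1$, so that the appended part equals the new smallest of the shrunken staircase, in which case $\lambda=(2k,2k-1,\ldots,k+1)$ with $k$ parts and $|\lambda|=k(3k+1)/2$. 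The empty partition is the only remaining fixed point and contributes $+1$ at $q^0$.

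Summing the signed contributions of these fixed points therefore gives
$$
\sum_{n\ge 0}\bigl(D_e(n)-D_o(n)\bigr)q^n \;=\; 1 \;+\; \sum_{k\ge 1}(-1)^k\bigl(q^{k(3k-1)/2}+q^{k(3k+1)/2}\bigr).
$$
Reindexing the first family via $k\mapsto -k$ and using that $(-k)(3(-k)+1)/2=k(3k-1)/2$ and $(-1)^{-k}=(-1)^k$ consolidates the two sums into $\sum_{k\in\mathbb{Z}}(-1)^k q^{k(3k+1)/2}$, matching the right-hand side of the claimed identity.
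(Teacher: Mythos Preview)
Your argument is the classical Franklin involution, and it is carried out correctly: the expansion of the product as a signed sum over partitions into distinct parts, the definition of $m(\lambda)$ and $\sigma(\lambda)$, the two moves, and the identification of the exceptional staircase partitions $(2k-1,\ldots,k)$ and $(2k,\ldots,k+1)$ with weights $k(3k-1)/2$ and $k(3k+1)/2$ are all standard and accurate, as is the final reindexing $k\mapsto -k$.

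The paper, however, does not prove this statement at all. Theorem~\ref{euler} is quoted as Euler's Pentagonal Number Theorem with a citation to Euler and is used as a black box in the proof of Theorem~\ref{iii}~(2). So there is nothing to compare against: you have supplied a complete self-contained proof where the paper simply invokes the result from the literature. If anything, your write-up would make the paper more self-contained, at the cost of inserting a combinatorial digression that is orthogonal to the paper's themes.
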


Now, we are ready to come back to the proof of Theorem~\ref{iii}.

\begin{proof}[Proof of Theorem~\ref{iii}]
For item (1), note that hyperedges of $\hat{H}$ form an infinite family of disjoint hyperedges; there is exactly one hyperedge of cardinality $k$ for every $k \ge 1$. Consider a chain $\mathcal{C}=(H_n: n\ge 0)$ whose limit is $\hat{H},$ in which $H_n$ is a subhypergraph of $\hat{H}$ induced by the set of edges of cardinality at most $n$. It follows from Lemma~\ref{lem}~(2) (applied $n-1$ times) that $\mathrm{id}(H_n) =  \prod_{k = 1}^n \left(1-\frac{1}{2^k}\right)$ and the result holds after taking the limit as $n \to \infty$.

For item (2), note that $S$ is the sum at $q=1/2$ in Theorem~\ref{euler}.  For a contradiction, suppose that $S$ is rational.  Let $S=a/b$ with $\gcd(a,b)=1$.  For large enough $N$ the tail of the series is bounded as follows:
\begin{align*}
\left|\sum_{|k|\geq N} (-1)^k 2^{(-3k^2-k)/2}\right| &\leq \sum_{k\geq 0} \left( 2^{-(3(k+N)^2-k-N))/2} + 2^{-(3(k+N)^2+k+N)/2} \right) \\
& = 2^{-(3N^2-N)/2}\sum_{k\geq 0} 2^{-(3k^2+6kN-k)/2}(1 + 2^{-k-N}) \\
& < 2^{-N(3N-1)/2+1},
\end{align*}
where the last inequality follows by bounding the final sum by $2$. Indeed, for large enough $N$
$$
\sum_{k\geq 0} 2^{-(3k^2+6kN-k)/2}(1 + 2^{-k-N}) \le 1.5 \sum_{k\geq 0} 2^{-(3k^2+6kN-k)/2} \le 1.5 \sum_{k\geq 0} 2^{-3kN} < 2.
$$
On the other hand, the partial sum can be calculated as follows:
$$\sum_{-N+1 \leq k \leq N-1} (-1)^k 2^{-k(3k+1)/2} = \df{x}{2^{(N-1)(3N-2)/2}} = \df{x}{2^{(3N^2-5N+2)/2}}$$
for some integer $x$, since the least common multiplier of the denominators is simply the highest power of $2$ in the partial sum.  Thus, we obtain the string of inequalities
$$2^{-N(3N-1)/2+1} > \left|S-\df{x}{2^{(3N^2-5N+2)/2}}\right| \geq \df{1}{b2^{(3N^2-5N+2)/2}},$$
where the last inequality is true, provided that $S$ is not equal to the partial sum. Now, observe that if $|S-x/2^{(3N^2-5N+2)/2}|$ were equal to 0 for some particular $N$, then for $N+1$ the inequality would hold, as the summands corresponding to $k=N+1$ have nonzero contribution of order $2^{(-3(N+1)^2+(N+1))/2}$. Therefore, we derive the following string of inequalities for an infinite sequence of large enough $N$:
$$
\df{1}{b} \leq 2^{-N(3N-1)/2+1}2^{(3N^2-5N+2)/2} = 2^{-2N+2},
$$
contradicting that $b$ is fixed.
\end{proof}

Define
$$
\mathbf{H}_{unb}=\{ x\in [0,1]:\mbox{ for some countable }H, \mbox{ }\mathrm{id}(H)=x \}.
$$
As the next theorem demonstrates, the set of densities when hyperedges are unbounded can be any real number in $[0,1]$. This is in contrast to Theorem~\ref{rat} on the rationality of values in $\mathbf{H}_{k}$. Further, the set $\mathbf{H}_k$ contains gaps consisting of intervals such as $(1-1/2^k, 1).$

\begin{theorem}\label{unb}
$\mathbf{H}_{unb} = [0,1].$
\end{theorem}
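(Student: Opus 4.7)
The containment $\mathbf{H}_{unb}\subseteq[0,1]$ is immediate from the definition, so the work lies in realizing every $x\in[0,1]$ as $\mathrm{id}(H)$ for some countable hypergraph $H$. My plan reduces this to a purely analytic fact: every $x\in[0,1]$ admits a representation
\[
x \ = \ \prod_{n\ge 1}\bigl(1 - 2^{-k_n}\bigr)
\]
for some (possibly finite) sequence $k_1,k_2,\ldots$ of positive integers. Granted this, I take $H$ to be the disjoint union of hypergraphs $E_n$, each consisting of a single hyperedge of size $k_n$ on $k_n$ fresh vertices; then $\mathrm{id}(E_n) = 1-2^{-k_n}$, and iterated application of Lemma~\ref{lem}~(2) along the chain $J_N := E_1\cup\cdots\cup E_N$ gives $\mathrm{id}(J_N) = \prod_{n\le N}(1-2^{-k_n})$. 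Letting $N\to\infty$ in view of Theorem~\ref{uni} yields $\mathrm{id}(H)=x$. The boundary cases $x=1$ (take $H$ edgeless) and $x=0$ (take $k_n=1$ for all $n$, giving $\prod 2^{-1}=0$) are handled directly.

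To produce the sequence $(k_n)$ for $x\in(0,1)$, I will proceed greedily. Set $y_0 = x$, and for $n\ge 1$ let $k_n$ be the least positive integer satisfying $1-2^{-k_n}\ge y_{n-1}$, then set $y_n = y_{n-1}/(1-2^{-k_n})$. By construction $y_n\in[y_{n-1},1]$, so $(y_n)$ is non-decreasing and bounded above by $1$. If $y_n=1$ at some finite step, the procedure terminates with a finite product equal to $x$. Otherwise we obtain an infinite sequence, and since $\prod_{i\le n}(1-2^{-k_i}) = x/y_n$, the desired equality $\prod_{n\ge 1}(1-2^{-k_n})=x$ reduces to showing $y_n\to 1$.

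This last convergence is the crux of the argument, and is the step I expect to require the most care. I will argue by contradiction: suppose $y_n\to L$ with $L<1$. Since $y_{n-1}\le L<1$ throughout, minimality of $k_n$ forces $2^{-(k_n-1)}>1-y_{n-1}\ge 1-L$ whenever $k_n\ge 2$, so $k_n$ is bounded uniformly by some constant $K=K(L)\in\mathbb{N}^+$. Consequently $y_n\ge y_{n-1}/(1-2^{-K})$ for every $n$, and iterating yields $y_n\ge x\cdot(1-2^{-K})^{-n}\to\infty$, contradicting $y_n\le 1$. This contradiction establishes $y_n\to 1$ and closes the argument.
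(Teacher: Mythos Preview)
Your argument is correct and takes a genuinely different route from the paper's. The paper works \emph{additively}: it fixes the binary expansion $r = 0.r_1r_2\ldots$, places one vertex per digit, and introduces a hyperedge $F_i\cup\{i\}$ whenever $r_i=0$, where $F_i$ records the earlier $1$-digits; an induction along the chain $H_n$ shows that $\mathrm{id}(H_n)$ equals the truncation $0.r_1\cdots r_n111\ldots$, so each new $0$-digit subtracts exactly $2^{-i}$ from the density. Your approach is \emph{multiplicative}: you exploit Lemma~\ref{lem}\,(2) to realize $x$ as a (possibly infinite) product $\prod(1-2^{-k_n})$ via a greedy factorization, and the resulting hypergraph is simply a disjoint union of single hyperedges. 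The paper's construction gives a canonical one-vertex-per-digit hypergraph with overlapping hyperedges and a transparent link to the binary expansion; your construction yields a structurally simpler hypergraph (a matching) and reduces the problem to a clean convergence lemma about infinite products, whose proof via the bounded-$k_n$ contradiction is short and self-contained. Both routes are of comparable length and difficulty.
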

\begin{proof}
Fix any $r \in [0,1]$, and write $r$ in binary expansion as $r=0.r_1r_2\ldots $, where $r_i \in \{0,1 \}$ for all $i\ge 1.$ It is possible that there are two such representations. In such a case, we will consider the representation with $r_i = 1$ for all $i \ge i_r$ for some $i_r \ge 1$. (For example, $r=7/8$ will be represented as $0.11011111\ldots$, not $0.11100000\ldots$.)

First, we construct a hypergraph $H_{(r)}$ which is a function of $r$. After that, we will show that its density is equal to $r$ by considering a suitable chain whose densities converge to $r.$

The vertex set of $H_{(r)}$ is $\mathbb{N}^+$. For $i \in \mathbb{N}^+$, let
$$
F_i = \{ j < i \text{ such that } r_j = 1 \}.
$$
That is, $F_i$ keeps track of the digits $j < i$ which are equal to $1$. Informally, the $i$th digit in the binary representation of $r$ (that is, $r_i$) corresponds to the vertex $i$ of $H_{(r)}$. For every $i \ge 1$ such that $r_i = 0$ we introduce a hyperedge $F_i \cup \{i\}$. This defines the hyperedge set of $H_{(r)}$. (For example, $H_{(7/8)}$ consists of one hyperedge only, $\{1,2,3\}$, whereas $H_{(1/\pi)}$ has an infinite number of hyperedges.)

For $n \in \mathbb{N}^+$, let $H_n$ be the hypergraph induced by the set of vertices $\{ 1, 2, \ldots, n\}$; $H_0$ is the empty graph. Since $(H_n:n\in \mathbb{N})$ forms a chain such  that $H_{(r)}=\lim_{n \rightarrow \infty}H_n$), we have that
$$
\mathrm{id}(H_{(r)})=\lim_{n \rightarrow \infty} \mathrm{id}(H_n).
$$
We will prove (by induction) that $\mathrm{id}(H_n)$ is the truncation of $r$ at the $n$th position rounded up to the nearest rational.

\smallskip
\noindent \emph{Claim}: For all integers $n\ge 1,$ we have that
$$
\mathrm{id}(H_n) = 0.r_1r_2\ldots r_n 111\ldots \ge r.
$$

The claim will finish the proof of the theorem, since the limit of the sequence $(\mathrm{id}(H_n): n \in \mathbb{N})$ tends to $r$.

Since $\mathrm{id}(H_0) = 1 = 0.111\ldots$, the base case holds. Suppose that the claim holds for all non-negative integers smaller than some $i \in \mathbb{N}$. If $r_i =1,$ then the hyperedge set of $H_{i-1}$ is exactly the same as the hyperedge set of $H_{i}$. The density remains the same and the claim holds in this case.
If $r_i =0,$ then exactly one more hyperedge is added to $H_{i-1}$ to form $H_{i}$; namely, $F_i \cup \{i\}$. Every independent set of $H_{i-1}$, except for the set $F_i$, can be extended to form an independent set of $H_i$ by adding vertex $i.$ Indeed, first note that $F_i$ itself is independent in $H_{(r)}$ (and so in $H_{i-1}$ as well) since each hyperedge has the property that the largest vertex corresponds to zero but all vertices in $F_i$ correspond to one. Moreover, any independent set in $H_{i-1}$ not containing all of $F_i$ remains independent after adding vertex $i$. Finally, if there were a proper superset $S$ of $F_i$ independent in $H_{i-1}$, then $S$ would contain a vertex $j<i$ corresponding to $0$. But then the set $F_j \cup \{j\}$ is already a hyperedge in $H_{i-1}$, contradicting the fact that $S$ is independent.  Thus, we lose exactly one possible extension, and so we have that
$$
\mathrm{id}(H_i) = \mathrm{id}(H_{i-1}) - 1/2^i,
$$
which is what we claimed to be the case when $r_i =0$.
\end{proof}

\section{Extensions to Independence Polynomials}\label{ext}

The {\em independence polynomial} of a hypergraph $H$ of order $n$ is the generating polynomial for the number of independent sets of each cardinality; that is, if $H$ has $i_{k}$ independent sets of size $k$, then the \emph{independence polynomial of }$H$ is defined by
\[ i(H,x) = \sum_{k \geq 0} i_{k}x^{k}.\]
Here we take $x\ge 0$ (although in other contexts, negative values are allowed). A trivial observation is that the number of independent sets of $H$ is equal to $i(H,1)$, so that the independence density of $H$ is given by
\[ \frac{i(H,1)}{2^n}.\]
For more on independence polynomials, see the survey \cite{lm}.

A natural extension of the definition of independence densities is the following. Let ${\mathcal C}$ be a chain of finite hypergraphs $H_{m}$ of orders $n_{m}$ with limit $H.$ Then for $x \geq 0$, we define the \emph{independence density of $H$ at $x$ with respect to the chain} ${\mathcal C}$ by
\[ \mathrm{id}_{\mathcal C}(H,x) = \lim_{m \rightarrow \infty} \frac{i(H_{m},x)}{2^{n_{m}}}\]
provided the limit exists (note that we allow the limit to be infinite). We note that a similar (but distinct) approach has been used towards limits of chromatic polynomials, especially the study of thermodynamic limits in the Potts model in statistical mechanics; see~\cite{psg,s}.

As proved in Section~\ref{inf}, $\mathrm{id}_{\mathcal C}(H,1) = \mathrm{id}(H)$ exists for any countable graph $H$, and is independent of the chain ${\mathcal C}$. Do analogous results hold for $\mathrm{id}_{\mathcal C}(H,x)$ for other values of $x$? The next lemma shows that for $ x \leq 1$, the answer is yes.

\begin{lemma}\label{lem:lem9}
Let $\mathcal C $ be chain of finite hypergraphs $H_{m}$ of orders $n_{m}$ with limit $H$. Then for $x \leq 1$,
\[ \mathrm{id}_{\mathcal C}(H,x) = \left\{ \begin{array}{ll} 0 & \mbox{if } x < 1\\
                                                         \mathrm{id}(H) & \mbox{if } x = 1. \end{array} \right. \]
\end{lemma}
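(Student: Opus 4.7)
The plan is to split on whether $x=1$ or $x<1$ and estimate the independence polynomial directly. The case $x=1$ is immediate from the definitions: $i(H_m,1)=\sum_k i_k = i(H_m)$ is the total count of independent sets in $H_m$, so $i(H_m,1)/2^{n_m}=\mathrm{id}(H_m)$, and the limit equals $\mathrm{id}(H)$ by Theorem~\ref{uni}.

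For $0 \le x < 1$, the key step is a crude but decisive binomial majorization. Every independent set of $H_m$ is a subset of $V(H_m)$, and since $x \ge 0$ all terms of the polynomial are nonnegative, so
\[
i(H_m,x) \;=\; \sum_{S \text{ independent}} x^{|S|} \;\le\; \sum_{S \subseteq V(H_m)} x^{|S|} \;=\; \sum_{k=0}^{n_m} \binom{n_m}{k} x^k \;=\; (1+x)^{n_m}.
\]
Dividing by $2^{n_m}$ yields
\[
\frac{i(H_m,x)}{2^{n_m}} \;\le\; \left(\frac{1+x}{2}\right)^{n_m}.
\]
Since $x<1$, the base is strictly less than $1$, and since $H$ is countably infinite we have $n_m \to \infty$. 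Hence the upper bound tends to $0$, and so $\mathrm{id}_{\mathcal{C}}(H,x)=0$.

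There is essentially no obstacle here: the only content is the binomial bound, which collapses the independence polynomial into a geometric quantity in $n_m$ and automatically absorbs the dyadic normalization. The one implicit assumption is $n_m \to \infty$, which is the infinite setting that is the focus of the paper (for a finite limit hypergraph the chain would stabilize and the value at $x<1$ would be the positive constant $i(H,x)/2^{|V(H)|}$). Note also that the argument makes no use of the chain structure beyond $n_m \to \infty$, so as a by-product the value $\mathrm{id}_{\mathcal{C}}(H,x)$ is automatically independent of the chain $\mathcal C$ for every $x\in[0,1]$.
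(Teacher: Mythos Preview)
Your proof is correct and essentially identical to the paper's: both bound $i(H_m,x)$ by the independence polynomial of the edgeless hypergraph on $n_m$ vertices, namely $(1+x)^{n_m}$, and then observe that $((1+x)/2)^{n_m}\to 0$ for $x<1$. The paper phrases the majorization via the spanning-subhypergraph inequality rather than the direct sum over all subsets, but the content is the same.
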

\begin{proof}
We focus on the case when $x<1$ as the remaining case is immediate.
Observe that if $F$ is a spanning subhypergraph of a finite hypergraph $H$, then for $x \geq 0$, $i(H,x) \leq i(F,x)$ as every independent set of $H$ is an independent set of $F$. In particular, for a hypergraph $H$ of order $n$, $i(H,x)$ is at most the independence density polynomial for the hypergraph of order $n$ with no hyperedges, which is in turn equal to $(1+x)^n$. Hence, by dividing by $2^{n}$ and taking the limit, we see that for $x \in [0,1)$,
\[ 0 < \frac{i(H_{m},x)}{2^{n_{m}}} \leq \left( \frac{1+x}{2} \right)^{n_{m}}\]
and the right-most term tends to 0. By taking the limit as $m$ tends to $\infty$, we conclude that $\mathrm{id}_{\mathcal C}(H,x) = 0$.
\end{proof}

Note that for some countable hypergraphs, the independence density at $x$ is always 0. For example, as $i(K_{n},x)/2^{n} = (1+nx)/2^n$, it follows that the independence density at $x$ of the complete infinite graph $K_{\omega}$ is 0 for all $x \geq 0$ (and hence, independent of the chain). We can extend this by the following result which shows that if the independence numbers are not too large (as a function of the order of the hypergraph), then the independence density at $x$ is equal to $0$ for all $x \geq 0$.

\begin{theorem}\label{nnn}
Suppose that for each $n$, $H_{n}$ is a hypergraph of order $n$ with independence number $\beta_{n}$. Let $\omega=\omega(n)$ be any function tending to $\infty$ with $n$. If $\beta_{n} \leq n/\omega$, then for every $x > 0$ we have that
\[ \frac{i(H_{n},x)}{2^{n}} =o(1).\]
\end{theorem}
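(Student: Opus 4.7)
The plan is to use the trivial bounds $i_k \leq \binom{n}{k}$ on the number of independent sets of size $k$, together with $i_k = 0$ for $k > \beta_n$, and to show that truncating the binomial expansion at $\beta_n = o(n)$ leaves something negligible compared to $2^n$. Splitting into $0 < x \leq 1$ (where $x^k \leq 1$) and $x > 1$ (where $x^k \leq x^{\beta_n}$), one can write in a unified way
\begin{equation*}
i(H_n, x) = \sum_{k=0}^{\beta_n} i_k x^k \leq \max(1, x^{\beta_n}) \sum_{k=0}^{\beta_n} \binom{n}{k}.
\end{equation*}

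Next I would apply a standard partial binomial sum estimate. Since $\omega(n) \to \infty$, we have $\beta_n \leq n/\omega \leq n/2$ for all sufficiently large $n$, so
\begin{equation*}
\sum_{k=0}^{\beta_n} \binom{n}{k} \leq (\beta_n+1)\binom{n}{\beta_n} \leq (\beta_n+1)\left(\frac{en}{\beta_n}\right)^{\beta_n} \leq (2e\omega)^{\beta_n},
\end{equation*}
where the last step absorbs $\beta_n + 1 \leq 2^{\beta_n}$ (the case $\beta_n = 0$ is trivial since then $i(H_n,x) = 1$). Combining the two displays gives
\begin{equation*}
\frac{i(H_n, x)}{2^n} \leq \frac{\bigl(2e\omega \cdot \max(1, x)\bigr)^{\beta_n}}{2^n}.
\end{equation*}

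To finish, I would take logarithms: using $\beta_n \leq n/\omega$, the logarithm of the right-hand side is at most
\begin{equation*}
\frac{n}{\omega} \log\bigl(2e\omega \cdot \max(1,x)\bigr) - n \log 2.
\end{equation*}
Since $\omega \to \infty$ forces $\log \omega / \omega \to 0$, the first term is $o(n)$, so the expression tends to $-\infty$, which yields the claim. There is no serious obstacle here: the whole argument reduces to the well-known fact that the number of subsets of $\{1,\ldots,n\}$ of size at most $n/\omega$ is subexponential relative to $2^n$, and the $x^{\beta_n}$ weighting is too mild to spoil this estimate for any fixed $x > 0$.
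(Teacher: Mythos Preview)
Your overall strategy matches the paper's: bound $i_k \le \binom{n}{k}$, cut off at $k=\beta_n$, use $\binom{n}{k}\le (en/k)^k$, and take logs. However, one inequality in the middle is wrong. You claim
\[
(\beta_n+1)\left(\frac{en}{\beta_n}\right)^{\beta_n}\le (2e\omega)^{\beta_n},
\]
which after absorbing $\beta_n+1\le 2^{\beta_n}$ would require $en/\beta_n\le e\omega$, i.e.\ $\beta_n\ge n/\omega$. The hypothesis is the \emph{opposite} inequality $\beta_n\le n/\omega$, so the base $en/\beta_n$ can be much larger than $e\omega$. For a concrete failure, take $\beta_n=1$ and $\omega=\sqrt{n}$: then the left side is of order $n$ while the right side is of order $\sqrt{n}$.

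The fix is to replace the base and exponent simultaneously, using that $t\mapsto (en/t)^t$ is increasing on $(0,n)$ (its logarithmic derivative is $\log(n/t)>0$). Since $\beta_n\le n/\omega$, this gives
\[
\left(\frac{en}{\beta_n}\right)^{\beta_n}\le (e\omega)^{n/\omega},
\]
and the rest of your argument goes through with $n/\omega$ in the exponent from the start rather than introducing it only at the log step. This is exactly what the paper does (implicitly) when it bounds $\beta_n\bigl(1+\log x+\log(n/\beta_n)\bigr)$ by $(n/\omega)\bigl(1+\log x+\log\omega\bigr)$.
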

\begin{proof}
The result holds for $x < 1$ by Lemma~\ref{lem:lem9}. Fix $x\ge 1.$
We use the well known bound that for $1 \leq k \leq n$,
\[ {{n} \choose {k}} \leq \left( \frac{en}{k} \right)^{k}.\]
For $n$ sufficiently large so that $\beta_{n} < n/2$, we have that
\begin{eqnarray*}
i(H_{n},x)    & \leq & \sum_{k = 0}^{\beta_{n}}{{n} \choose {k}}x^{k}\\
              & \leq & (\beta_{n}+1){{n} \choose {\beta_{n}}}x^{\beta_{n}}\\
              & \leq & (\beta_{n}+1)\left( \frac{enx}{\beta_{n}} \right)^{\beta_{n}}.
\end{eqnarray*}
Hence,  we have that
\begin{eqnarray*}
 \frac{i(H_{n},x)}{2^n} &\leq & (\beta_{n}+1)\left( \frac{enx}{\beta_{n}} \right)^{\beta_{n}}/2^{n}. \\
 &=& \exp \Big(\log (\beta_{n} + 1) + \beta_{n} (1 +  \log n + \log x - \log \beta_{n}) - n\log 2 \Big)\\
 & \leq & \exp\Big(\log n + \frac{n}{\omega}(1+\log x +  \log \omega) - n\log 2 \Big)\\
 &=& \exp\Big( - n (\log 2 - o(1)) \Big) \\
 &=& o(1), 
\end{eqnarray*}
and the proof is finished.
\end{proof}

We point out that the result of Theorem~\ref{nnn} is new even for independence densities of countable hypergraphs (that is, for $x = 1$), and provides a sufficient condition for independence densities to be 0 depending only on the independence numbers of hypergraphs in a chain.

The following theorem provides the limiting behaviour of $\mathrm{id}_{\mathcal C}(H, x)$ as $x$ tends to infinity.

\begin{theorem}\label{lim}
For a countable hypergraph which is the limit of the chain $\mathcal{C}=(H_m:m\in \mathbb{N})$, we have that
$$ \lim_{x\rightarrow \infty} \mathrm{id}_{\mathcal C}(H,x) \in \{ 0, \infty\}.$$
\end{theorem}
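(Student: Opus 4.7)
My plan is to establish a dichotomy based on the asymptotic behavior of the independence numbers $\alpha_m := \alpha(H_m)$ relative to the orders $n_m := |V(H_m)|$. Since $i(H,x) = \sum_k i_k x^k$ has non-negative coefficients, the ratio $i(H_m,x)/2^{n_m}$ is non-decreasing in $x$ on $[0,\infty)$. Consequently, whenever $\mathrm{id}_\mathcal{C}(H,x)$ is defined, it is non-decreasing in $x$, so $L := \lim_{x\to\infty}\mathrm{id}_\mathcal{C}(H,x)$ always exists in $[0,\infty]$; the task is to rule out finite positive values of $L$. I will implicitly assume $H$ is infinite (so $n_m\to\infty$), since a finite $H$ consisting entirely of singleton hyperedges would give $\mathrm{id}_\mathcal{C}(H,x) = 1/2^{|V(H)|}$, violating the conclusion.

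First, I would treat the case $\alpha_m/n_m \to 0$. Choose a function $\omega_m \to \infty$ with $\alpha_m \leq n_m/\omega_m$ (for example, $\omega_m := \lfloor \sqrt{n_m/\max(1,\alpha_m)}\rfloor$ works). The estimate in the proof of Theorem~\ref{nnn} only exploits the bounds $\alpha_m \leq n_m/\omega_m$ and $\omega_m \to \infty$ (not the particular indexing by order), so rerunning the same exponential calculation along the chain yields $i(H_m,x)/2^{n_m} = o(1)$ for every fixed $x > 0$. Hence $\mathrm{id}_\mathcal{C}(H,x) = 0$ for all $x$, and $L = 0$.

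Next, I would handle the complementary case $c := \limsup_m \alpha_m/n_m > 0$. Extract a subsequence $(m_k)$ with $\alpha_{m_k} \geq (c/2)\, n_{m_k}$ and $n_{m_k} \to \infty$. The single largest independent set in $H_{m_k}$ already contributes the monomial $x^{\alpha_{m_k}}$ to the independence polynomial, giving the trivial lower bound
$$\frac{i(H_{m_k},x)}{2^{n_{m_k}}} \;\geq\; \frac{x^{\alpha_{m_k}}}{2^{n_{m_k}}} \;\geq\; \left(\frac{x^{c/2}}{2}\right)^{n_{m_k}},$$
which diverges to $\infty$ once $x > 2^{2/c}$. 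In particular $\limsup_m i(H_m,x)/2^{n_m} = \infty$ for every such $x$; since $\mathrm{id}_\mathcal{C}(H,x)$ is presumed to exist in $[0,\infty]$, the full limit must equal $\infty$ for those $x$, and therefore $L = \infty$.

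The only genuine technical point is the routine adaptation of Theorem~\ref{nnn} from families indexed by order to subsequences of a chain, which amounts to substituting $n_m$ and $\omega_m$ for $n$ and $\omega$ in the same estimate; nothing new is needed. Everything else (the lower bound via the maximum independent set, and monotonicity in $x$) is essentially immediate, so I expect no serious obstacle beyond correctly handling the tacit assumption that $\mathrm{id}_\mathcal{C}(H,x)$ is defined for arbitrarily large $x$.
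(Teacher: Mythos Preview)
Your argument is correct, but it takes a genuinely different route from the paper's. The paper argues directly by contradiction via a scaling trick: assuming the limit is some finite $z>0$, choose $x_0$ with $z/2 < \mathrm{id}_{\mathcal C}(H,x) < 2z$ for all $x\ge x_0$; since the constant term $i_0/2^{n_m}=1/2^{n_m}$ vanishes as $m\to\infty$ and $(4x_0)^k\ge 4\,x_0^k$ for every $k\ge 1$, one obtains $\mathrm{id}_{\mathcal C}(H,4x_0)\ge 4\,\mathrm{id}_{\mathcal C}(H,x_0)>2z$, a contradiction. That proof is shorter and entirely self-contained, invoking neither Theorem~\ref{nnn} nor any facts about independence numbers. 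Your dichotomy on $\limsup_m \alpha_m/n_m$ trades brevity for additional content: it actually identifies which alternative occurs, giving $L=0$ exactly when $\alpha_m/n_m\to 0$ and $L=\infty$ otherwise (under the standing hypothesis that $\mathrm{id}_{\mathcal C}(H,x)$ exists for large $x$). Both arguments rest on the same tacit assumptions that $H$ is infinite and that $\mathrm{id}_{\mathcal C}(H,x)$ is defined for arbitrarily large $x$, points you rightly make explicit.
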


\begin{proof} We note first that $\mathrm{id}_{C}(H,x)$ is a non-decreasing function of $x,$ and so its limit as $x$ tends to $\infty$ either exists (and is non-negative) or is $\infty$. Let $n_m$ be the order of $H_m$. Suppose for a contradiction that $\lim_{x\rightarrow \infty} \mathrm{id}_{\mathcal C}(H,x) = z \in (0,\infty)$. Hence, there is an $x_0$ such that for all $x \ge x_0$ we have that
\begin{equation}\label{yy}
 z/2 < \mathrm{id}_{\mathcal C}(H,x) = \lim_{m \rightarrow \infty} \frac{i(H_{m},x)}{2^{n_{m}}} < 2z.
\end{equation}
We derive that
\begin{eqnarray*}
\mathrm{id}_{\mathcal C}(H,4 x_0) &=& \lim_{m \rightarrow \infty} \frac{i(H_{m},4 x_0)}{2^{n_{m}}} \\
&=& \lim_{m \rightarrow \infty} \frac {\sum_{k \geq 0} i_{k}(4 x_0)^{k}}{2^{n_{m}}} \\
&=& \lim_{m \rightarrow \infty} \frac {\sum_{k \geq 1} i_{k}( 4x_0)^{k}}{2^{n_{m}}} \\
&\ge& 4 \lim_{m \rightarrow \infty} \frac {\sum_{k \geq 1} i_{k} x_0^{k}}{2^{n_{m}}} \\
&=& 4 \lim_{m \rightarrow \infty} \frac {\sum_{k \geq 0} i_{k} x_0^{k}}{2^{n_{m}}} \\
&=& 4 \ \mathrm{id}_{\mathcal C}(H, x_0) > 2 z,
\end{eqnarray*}
which contradicts (\ref{yy}).
\end{proof}

\medskip

We conclude with some examples (focusing on graphs only) to show that, contrary to the situation for $x=1$, for $x>1$ the independence density at $x$ may depend on the chain, and may give non-dyadic rationals. The infinite path or \emph{ray} $P$ (either one- or two-way) is the limit of a chain ${\mathcal C}$ of paths $P_{n}$ of order $n$ (there are other chains whose limits are the infinite path, but we focus on this particular chain). The independence polynomials of paths $P_{n}$ satisfy the recurrence
\[ i(P_{n},x) = i(P_{n-1},x) + x \; i(P_{n-2},x),\]
with initial conditions $i(P_{1},x) = 1+x$ and $i(P_{2},x) = 1+2x$. In \cite{jichrjn} the recurrence was solved to derive
\begin{eqnarray*} i(P_{n},x) & = & \frac{\sqrt{1+4x}+(1+2x)}{2\sqrt{1+4x}}\left( \frac{1+\sqrt{1+4x}}{2} \right)^{n} + \\
 & & \frac{\sqrt{1+4x}-(1+2x)}{2\sqrt{1+4x}}\left( \frac{1-\sqrt{1+4x}}{2} \right)^{n},
\end{eqnarray*}
and thus,
\begin{eqnarray*} \frac{i(P_{n},x)}{2^n} & = & \frac{\sqrt{1+4x}+(1+2x)}{2\sqrt{1+4x}}\left( \frac{1+\sqrt{1+4x}}{4} \right)^{n} + \\
 & & \frac{\sqrt{1+4x}-(1+2x)}{2\sqrt{1+4x}}\left( \frac{1-\sqrt{1+4x}}{4} \right)^{n}.
\end{eqnarray*}
(We remark that by setting $x = 1$, we see that $i(P_{n},1)$ satisfies the same recurrence as the Fibonacci sequence $f_{n}$, though the initial terms are $f_{3}$ and $f_{4}$. The $n$th-term of the Fibonacci sequence is well known to be given by
$$f_{n} = \frac{1}{\sqrt{5}} \left( \left( \frac{1+\sqrt{5}}{2} \right)^{n} - \left( \frac{1-\sqrt{5}}{2} \right)^{n} \right),
$$
which, after computation, coincides with the formula given above for $i(P_{n-2},1)$.)

For $x \geq 0$, clearly the absolute value of $\frac{1+\sqrt{1+4x}}{4}$ dominates that of $\frac{1-\sqrt{1+4x}}{4}$. Now $\frac{1+\sqrt{1+4x}}{4}$ is an increasing function of $x$ and is equal to $1$ when $x = 2$. It follows that
\[ \mathrm{id}_{\mathcal C}(P,x) = \left\{ \begin{array}{ll}     0 & \mbox{if } 0 \leq x < 2\\
                                                          4/3 & \mbox{if } x = 2 \\
                                                       \infty & \mbox{if } x > 2.
\end{array} \right. \]

The example of the graph $K_{\omega} \cup \overline{K_{\omega}}$ is even more enlightening. It is the limit of chains of graphs $(H_n:n\in \mathbb{N})$ of the form $H_n = K_{a_n} \cup \overline{K_{b_n}}$, where $a_{n}$ and $b_{n}$ both tend to $\infty$ as $n \rightarrow \infty$. Now, for any $a, b \in \mathbb{N}$ and $x \ge 0$ we have that
$$
\frac{i(K_{a} \cup \overline{K_{b}},x)}{2^{a+b}} = \frac{(1+ax)(1+x)^{b}}{2^{a+b}} = \left( \frac{1+x}{2} \right)^{b} \frac{1+ax}{2^{a}}.
$$
It follows that for the chain ${\mathcal C}$ where $a_{n} = b_{n} = n$, we see that
\[ \frac{i(K_{a_n} \cup \overline{K_{b_n}},x)}{2^{a_n+b_n}} = \left( \frac{1+x}{4}\right)^{n}(1+nx) \]
which tends to 0 if $x < 3$ and to infinity if $x \geq 3$, so we have an example of a chain where the jump from 0 to infinity occurs with no intermediate point where the value is positive.

But we can do much more with the graph $K_{\omega} \cup \overline{K_{\omega}}$. Given a hypergraph $H$ with chain $\mathcal{C}$ define a \emph{jumping point} $x'\ge 1$ to be a real number such that $\mathrm{id}_{\mathcal C}(H,x')$ is finite but $\mathrm{id}_{\mathcal C}(H,x)=\infty$ if $x>x'.$ The next theorem shows that any real number not excluded by Lemma~\ref{lem:lem9} can be a jumping point.

\begin{theorem}\label{thm:jumping}
For every real number $r\in (1,\infty)$, there is a chain $\mathcal{C}_r$ in $K_{\omega} \cup \overline{K_{\omega}}$ for which $r$ is a jumping point. Moreover, for every chain in $\overline{K_{\omega}}$, $1$ is a jumping point.
\end{theorem}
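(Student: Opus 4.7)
The strategy is to specialize the product formula
$$\frac{i(K_{a_n}\cup\overline{K_{b_n}},x)}{2^{a_n+b_n}}=\left(\frac{1+x}{2}\right)^{b_n}\frac{1+a_n x}{2^{a_n}}$$
derived in the paragraph preceding the theorem to a chain whose parameters $(a_n,b_n)$ are tuned so that this sequence stays bounded and positive at $x=r$ while blowing up for every $x>r$. Set $\gamma:=\log((1+r)/2)$, which is positive since $r>1$. Taking the logarithm of the right-hand side at $x=r$ gives $b_n\gamma-a_n\log 2+\log(1+a_nr)$, and I would like this quantity to remain in a bounded interval.

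The natural choice is $a_n:=n$ and $b_n:=\lceil b_n^*\rceil$, where $b_n^*:=(n\log 2-\log(1+nr))/\gamma$. A brief computation confirms (i) $b_n^*$ is strictly increasing for $n\ge 2$, so $b_n$ is non-decreasing; (ii) $b_n^*\sim(\log 2/\gamma)\,n\to\infty$, so $b_n\to\infty$; and (iii) by the ceiling choice, $c_n:=\left(\frac{1+r}{2}\right)^{b_n}(1+nr)/2^n$ lies in $[1,e^\gamma)$. After possibly truncating an initial segment to guarantee monotonicity throughout, $(H_n:=K_{a_n}\cup\overline{K_{b_n}})_n$ is a valid chain with limit $K_\omega\cup\overline{K_\omega}$.

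The main obstacle is that the rounding from $b_n^*$ to $b_n$ may cause $c_n$ to oscillate rather than converge, so the chain $(H_n)$ on its own need not witness $r$ as a jumping point. I resolve this via Bolzano--Weierstrass: since $c_n\in[1,e^\gamma)$ is bounded, there exist indices $n_1<n_2<\cdots$ with $c_{n_k}\to L\in[1,e^\gamma]$. The key observation is that a subsequence of a chain is again a chain, for $a_{n_k}$ and $b_{n_k}$ remain non-decreasing and tend to infinity. Hence $\mathcal{C}_r:=(H_{n_k})_k$ is a chain with limit $K_\omega\cup\overline{K_\omega}$, and by construction $\mathrm{id}_{\mathcal{C}_r}(H,r)=L$ is a finite positive number.

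To finish, for any $x>r$ I factor
$$\left(\frac{1+x}{2}\right)^{b_{n_k}}\frac{1+a_{n_k}x}{2^{a_{n_k}}}=c_{n_k}\left(\frac{1+x}{1+r}\right)^{b_{n_k}}\frac{1+a_{n_k}x}{1+a_{n_k}r}.$$
The last factor is at least $1$, the middle factor tends to infinity because $(1+x)/(1+r)>1$ and $b_{n_k}\to\infty$, and $c_{n_k}\to L>0$, so the whole expression diverges and $\mathrm{id}_{\mathcal{C}_r}(H,x)=\infty$; thus $r$ is a jumping point. The ``moreover'' clause is straightforward: any chain in $\overline{K_\omega}$ takes the form $(\overline{K_{b_n}})_n$ with $b_n\to\infty$, and $i(\overline{K_{b_n}},x)/2^{b_n}=((1+x)/2)^{b_n}$ equals $1$ at $x=1$ but diverges for every $x>1$, so $1$ is automatically a jumping point.
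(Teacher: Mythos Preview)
Your proof is correct and in fact more careful than the paper's. Both arguments exploit the product formula for $i(K_a\cup\overline{K_b},x)/2^{a+b}$, but the paper simply takes $b_n=n$ and $a_n=\lfloor Cn\rfloor$ with $2^{C+1}=1+r$, and computes directly that the ratio tends to $0$ for $x<r$ and to $\infty$ for $x\ge r$. Note, however, that with this choice one has $((1+r)/2)^n=2^{Cn}$, so at $x=r$ the ratio equals $2^{Cn-\lfloor Cn\rfloor}(1+\lfloor Cn\rfloor r)\to\infty$; thus the paper's chain gives $\mathrm{id}_{\mathcal{C}_r}(H,r)=\infty$, which does \emph{not} meet the paper's own definition of a jumping point (which requires a finite value at $r$).

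Your construction repairs this: by tuning $b_n$ so that the ratio at $x=r$ stays in the bounded interval $[1,e^\gamma)$, and then passing to a convergent subsequence via Bolzano--Weierstrass, you obtain a chain along which $\mathrm{id}_{\mathcal{C}_r}(H,r)=L$ is genuinely finite and positive, while your factorization argument cleanly handles $x>r$. The cost is a slightly less explicit chain and an extra compactness step; the benefit is that you actually prove the statement as written. The ``moreover'' clause is handled identically in both proofs.
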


\begin{proof}
Fix $r \in (1,\infty)$, and choose $C=C(r)>0$ such that $2^{C+1}=1+r$. Using the notation as in the example before the statement of the theorem, consider the chain $\mathcal{C}_r$ defined by $b_n = n$ and $a_n = \lfloor Cn \rfloor$. For $x < r$ we have that
\begin{eqnarray*}
\mathrm{id}_{{\mathcal C}_r}(K_{\omega} \cup \overline{K_{\omega}},x) &= &\lim_{n \rightarrow \infty} \frac {(1+\lfloor Cn \rfloor x)(1+x)^n}{2^{\lfloor Cn \rfloor + n}} \\
&\le& \lim_{n \rightarrow \infty}\frac {(1+Cnx)(1+x)^n}{2^{Cn-1 + n}} \\
&= &\lim_{n \rightarrow \infty} 2(1+Cnx) \left( \frac {1+x}{1+r} \right)^n = 0.
\end{eqnarray*}
On the other hand, for $x \ge r$ we have that
\begin{eqnarray*}
\mathrm{id}_{{\mathcal C}_r}(K_{\omega} \cup \overline{K_{\omega}},x) &= &\lim_{n \rightarrow \infty} \frac {(1+\lfloor Cn \rfloor x)(1+x)^n}{2^{\lfloor Cn \rfloor + n}}\\
&\ge &\lim_{n \rightarrow \infty} \frac {(Cnx)(1+x)^n}{2^{Cn + n}} \\
&= &\lim_{n \rightarrow \infty} (Cnx) \left( \frac {1+x}{1+r} \right)^n \\
&\ge & \lim_{n \rightarrow \infty} (Cnx) = \infty.
\end{eqnarray*}

The second part is straightforward. Any chain $(H_n:n\in \mathbb{N})$ with limit $\overline{K_{\omega}}$ is of the form $H_n = \overline{K_{b_n}}$ and $b_n \to \infty$. We have that
\[ \frac{i(\overline{K_{b_n}},x)}{2^{b_n}} = \left( \frac{1+x}{2}\right)^{b_n}, \]
which tends to 0 if $x < 1$, to 1 if $x=1$, and to infinity if $x>1$.
\end{proof}
The graph $G = K_{\omega} \cup \overline{K_{\omega}}$ admits chains where, for a given $x>1$, $\mathrm{id}_{{\mathcal C}}(G,x)$ does not exist. As before, we consider chains of graphs $(G_n:n\in \mathbb{N})$ of the form $G_n = K_{a_n} \cup \overline{K_{b_n}}$, and use the notation as in the proof of Theorem~\ref{thm:jumping}. Fix $x>1$, let $C_1 = C((1+x)/2)$, $C_2 = C(2x)$, and consider chains ${\mathcal C_i} =  (G^i_{n}: n \in \mathbb{N})$, where $a^i_n=\lfloor C_i n \rfloor$ and $b^i_n=n$ (where $i = 1,2$). (Note that $C_2 > C_1$, since $C=C(x)$ is an increasing function of $x$.) The constants $C_1, C_2$ are chosen so that $\mathrm{id}_{\mathcal C_1}(G,x)=\infty$ and $\mathrm{id}_{\mathcal C_2}(G,x)=0$.

Now consider a chain ${\mathcal C_3}=( G^3_{n}: n \in \mathbb{N})$, where for even $n$ a graph from ${\mathcal C_1}$ is taken, and for odd $n$ we take a graph from ${\mathcal C_2}$. Since both $a^i_n$ and $b^i_n$ tend to infinity ($i=1,2$), it is always possible to select a graph that has the previous one as an induced subgraph. Then $\mathrm{id}_{\mathcal C_3}(G,x)$ does not exist, since its corresponding sequence of densities has a subsequence tending to infinity and a subsequence tending to zero.


\section{Acknowledgements}

We thank O-Yeat Chan for helpful discussions.

\end{document}